\documentclass[english,reqno,final]{amsart}   
\usepackage[T1]{fontenc}
\usepackage[latin1]{inputenc}
\usepackage[english]{babel}
\usepackage{amsmath,amsfonts,amssymb,amsthm}
\usepackage[notcite,notref]{showkeys}
\usepackage{mathrsfs}
\usepackage{graphicx}

\newtheorem{theorem}{Theorem}[section]

\newtheorem{proposition}[theorem]{Proposition}
\newtheorem{corollary}[theorem]{Corollary}
\newtheorem*{theorem*}{Theorem}
\theoremstyle{remark}

\newtheorem{definition}[theorem]{Definition}
\newtheorem{example}[theorem]{Example}

\newcommand{\seacsa}{$C^*$-al\-ge\-bra}

\newcommand{\seaFK}{\textnormal{FK}}
\newcommand{\seaPrim}{\textnormal{Prim}}
\newcommand{\seaCK}{Cuntz-Krieger algebra}
\newcommand{\searank}{\textnormal{rank}}
\newcommand{\seasix}{\textnormal{six}}
\newcommand{\seaop}{\textnormal{op}}
\newcommand{\seaFKgunnar}{\seaFK_{\mathcal R}}
\newcommand{\sealookslike}{looks like a \seaCK{}}
\newcommand{\sealooklike}{look like \seaCK{}s}

\input xy
\xyoption{all}

\title{Do phantom Cuntz-Krieger algebras exist?}

\author{Sara Arklint}
\address{Department of Mathematical Sciences, University of Copenhagen, Uni\-versi\-tets\-parken~5, DK-2100 Copenhagen, Denmark}
\email{arklint@math.ku.dk}

\date{\today}

\keywords{Cuntz-Krieger algebras, graph $C^*$-algebras, classification, filtered $K$-theory}
\subjclass[2010]{Primary: 46L35, 46L55, 46L80}
\thanks{This research was supported by the Danish National Research Foundation (DNRF) through the Centre for Symmetry and Deformation, and by the NordForsk Research Network ``Operator Algebras and Dynamics'' (grant \#11580).}

\begin{document}

\begin{abstract}
If  phantom \seaCK s do not exist, then real rank zero \seaCK s can be characterized by outer properties.  In this survey paper, a summary of the known results on non-existence of phantom \seaCK s is given.
\end{abstract}

\maketitle

\section{Introduction}

The Cuntz-Krieger algebras were introduced by J.~Cuntz and W.~Krieger in 1980, cf.~\cite{sea:CK80}, and are a generalization of the Cuntz algebras.
Given an $n \times n$ matrix $A$ with entries in $\{0,1\}$, its associated Cuntz-Krieger algebra $O_{A}$ is defined as the universal \seacsa{} generated by $n$ partial isometries $s_{1},\ldots, s_{n}$ satisfying the relations
\begin{eqnarray*}
1 = s_{1}s_{1}^{*}+\cdots + s_{n}s_{n}^{*} , \\
s_{i}^{*}s_{i} = \sum_{j=1}^{n} A_{ij}s_{j}s_{j}^{*} \textnormal{ for all } i=1,\ldots n .
\end{eqnarray*}
The \seaCK s arise from shifts of finite type, and it has been shown that the \seaCK s are exactly the graph algebras $C^{*}(E)$ arising from finite directed graphs $E$ with no sinks or sources.

Neither of the two equivalent definitions of \seaCK s give an outer characterization of \seaCK s; i.e., neither give a way of determining whether a \seacsa{} is a Cuntz-Krieger algebra, unless it is constructed from a graph or a shift of finite type.

A \seaCK{} is purely infinite if and only if it has real rank zero, and in the following we will mainly restrict to real rank zero \seaCK s since we will rely on classification results that only hold in the purely infinite case.
The \seaCK{} $O_{A}$ is purely infinite if and only if $A$ satisfies Cuntz's condition (II), and equivalently the \seaCK{} $C^{*}(E)$ is purely infinite if and only if the graph $E$ satisfies Krieger's condition (K).  

The notion of \seacsa s over a topological space is useful for defining phantom \seaCK s and for defining filtered $K$-theory, and in~\cite{sea:kirchberg}, Eberhard Kirchberg proved some very powerful classification results for $O_\infty$-absorbing \seacsa s over a space $X$ using $KK(X)$-theory.
A \seacsa{} $A$ over the finite $T_0$-space $X$ is a \seacsa{} equipped with a lattice-preserving map from the open sets of $X$ to the ideals in $A$, denoted $U\mapsto A(U)$ and extended to locally closed subsets as $U\setminus V\mapsto A(U)/A(V)$.
In particular a \seacsa{} with finitely many ideals is a \seacsa{} over its primitive ideal space.
\begin{definition} \label{sea:defphantoms}
A \seacsa{} $A$ with primitive ideal space $X$ \emph{\sealookslike{}} if
\begin{enumerate}
\item $A$ is unital, purely infinite, nuclear, separable, and of real rank zero,
\item $X$ is finite
\item for all $x\in X$, the group $K_*(A(x))$ is finitely generated,
the group $K_1(A(x))$ is free, and \label{sea:pointthree}
$\searank K_0(A(x))=\searank K_1(A(x))$,
\item for all $x\in X$, $A(x)$ is in the bootstrap class of Rosenberg and Schochet. \label{sea:pointfour}
\end{enumerate}
A \seacsa{} that \sealookslike{} but is not isomorphic to a \seaCK{}, is called a \emph{phantom Cuntz-Krieger algebra}.
\end{definition}
All real rank zero \seaCK s \sealooklike{}.
It is not known whether all \seacsa s that \sealooklike{} (and quack like \seaCK s) are \seaCK s.
If it is established that they are, i.e., that phantom Cuntz-Krieger algebras do not exist, then the above definition gives a characterization of the real rank zero \seaCK s.

An example to point out the relevance of such a characterization is given by Proposition~\ref{sea:propextensions}.
If phantom \seaCK s do not exist, the proposition determines exactly when an extension of real rank zero \seaCK s is a real rank zero \seaCK{}.

By a result of Lawrence G.~Brown and Gert K.~Pedersen, Theorem~3.14 of~\cite{sea:brownpedersen}, an extension of real rank zero \seacsa s has real rank zero if and only if projections in the quotient lift to projections in the extension.
Hence, if a \seacsa{} $A$ with primitive ideal space $X$ has real rank zero, then $K_0(A(Y\setminus U))\to K_1(A(U))$ vanishes for all $Y$ and $U$ where $Y$ is a locally closed subsets of $X$ and $U$ is an open subsets of $Y$.
Using this, an induction argument shows that for a \seacsa{} that \sealookslike{}, (\ref{sea:pointthree}) and (\ref{sea:pointfour}) of Definition~\ref{sea:defphantoms} hold for all locally closed subsets $Y$ of $X$.
\begin{proposition} \label{sea:propextensions}
Consider a unital extension $0\to I\to A\to A/I\to 0$ and assume that $A/I$ is a real rank zero \seaCK{} and that $I$ is stably isomorphic to a real rank zero \seaCK{}.
Then $A$ \sealookslike{} if and only if the induced map $K_{0}(A/I)\to K_{1}(I)$ vanishes.
\end{proposition}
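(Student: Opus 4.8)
The plan is to verify the four conditions of Definition~\ref{sea:defphantoms} for $A$ one at a time, and to observe that every condition except real rank zero holds automatically under the hypotheses, so that the whole statement collapses to a real-rank-zero criterion governed by the connecting map. Throughout, write $X=\seaPrim(A)$ and let $U\subseteq X$ be the open subset with $I=A(U)$ and $A/I=A(X\setminus U)$. Since $A/I$ is a \seaCK{}, its primitive ideal space $X\setminus U$ is finite; and since $I$ is stably isomorphic to a \seaCK{} — a class preserved by stable isomorphism together with its primitive ideal space — the space $U$ is finite as well. Hence $X$ is finite and condition~(2) holds.

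For conditions~(3) and~(4) I would use that each simple subquotient $A(x)$ with $x\in U$ equals the corresponding simple subquotient of $I$, while each $A(x)$ with $x\in X\setminus U$ equals the corresponding simple subquotient of $A/I$. As $A/I$ \sealookslike{}, and $I$ is stably isomorphic to a \seacsa{} that does, both satisfy~(3) and~(4) at all of their points; since stable isomorphism preserves $K$-theory and membership in the bootstrap class of Rosenberg and Schochet, these two conditions transfer to every $x\in X$.

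It remains to treat condition~(1). Unitality is a hypothesis, and nuclearity and separability pass to the extension $A$ from $I$ and $A/I$, both of which are nuclear and separable. For pure infiniteness I would invoke that $I$ and $A/I$, being nuclear, separable, and purely infinite (a real rank zero \seaCK{} is purely infinite), are strongly purely infinite, together with the permanence of strong pure infiniteness under extensions; thus $A$ is strongly purely infinite and in particular purely infinite. None of these properties involves the connecting map, so the only remaining — and decisive — point is real rank zero.

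Finally, for real rank zero the ``only if'' direction is already contained in the discussion preceding the statement: a real rank zero \seacsa{} has vanishing connecting maps $K_0(A(Y\setminus V))\to K_1(A(V))$ for all locally closed $Y$ and open $V\subseteq Y$, and taking $Y=X$ and $V=U$ gives that $K_0(A/I)\to K_1(I)$ vanishes whenever $A$ \sealookslike{}. For the ``if'' direction I would combine the Brown--Pedersen criterion (Theorem~3.14 of~\cite{sea:brownpedersen}), which says that the extension of the real rank zero algebras $I$ and $A/I$ has real rank zero exactly when projections in $A/I$ lift to projections in $A$, with the identification of the map $K_0(A/I)\to K_1(I)$ in the six-term exact sequence as the exponential (index) map, whose value on $[p]$ obstructs lifting the projection $p$. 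The main obstacle is precisely this last implication: showing that vanishing of the map on all of $K_0(A/I)$ forces every projection in $A/I$ to lift, rather than merely to lift stably. Here I expect to exploit the purely infinite structure of $A/I$, so that its $K_0$-classes are represented by genuine projections and liftability can be upgraded from the stable to the honest level, possibly after an induction along the finite ideal lattice of $A/I$; this would complete the proof.
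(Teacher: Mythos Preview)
Your argument follows the paper's exactly: check the conditions of Definition~\ref{sea:defphantoms}, note that unitality, nuclearity, separability, pure infiniteness, finiteness of $\seaPrim(A)$, and the pointwise conditions~(3)--(4) all pass to $A$ from $I$ and $A/I$ independently of the connecting map, and reduce the biconditional to the equivalence of real rank zero for $A$ with vanishing of $K_0(A/I)\to K_1(I)$. The paper's proof is simply the two-sentence version of this, citing Theorem~3.14 of~\cite{sea:brownpedersen} directly for that equivalence and dismissing the remaining properties as ``well-known or easy to check'' to be closed under extensions.

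The one place you add work is in flagging the passage from $\delta=0$ to honest (rather than stable) projection lifting as ``the main obstacle'' and proposing to close it with pure infiniteness of $A/I$ plus an induction along its ideal lattice. The paper simply absorbs this step into the Brown--Pedersen citation; strictly speaking Theorem~3.14 of~\cite{sea:brownpedersen} gives the projection-lifting criterion, and its identification with vanishing of the exponential map is a separate but standard fact for extensions whose ideal has real rank zero. Your detour through pure infiniteness would certainly succeed, but it is heavier machinery than the paper intends or than is actually needed here.
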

\begin{proof}
By Theorem~3.14 of~\cite{sea:brownpedersen}, the \seacsa{} $A$ is of real rank zero if and only if the induced map $K_{0}(A/I)\to K_{1}(I)$ vanishes.
It is well-known or easy to check that the other properties stated in Definition~\ref{sea:defphantoms} are closed under extensions.
\end{proof}

\section{Special cases}

One of the first places one would look for phantom \seaCK s are among the matrix algebras over real rank zero \seaCK s.
Clearly, if $O_A$ is a \seaCK{} of real rank zero, then $M_n(O_A)$ \sealookslike{} for all $n$.
Since $M_n(O_A)$ is a graph algebra, one then immediately asks if a graph algebra can be a phantom \seaCK{}.
It turns out that it cannot.

\begin{theorem}[{\cite{sea:ar}}]
Let $E$ be a directed graph and assume that its graph algebra $C^{*}(E)$ is unital and satisfies $\searank K_{0}(C^{*}(E))=\searank K_{1}(C^{*}(E))$.
Then $C^{*}(E)$ is isomorphic to a \seaCK{}.
\end{theorem}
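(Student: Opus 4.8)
The plan is to realize $C^*(E)$ as the algebra of a finite graph with neither sinks nor sources, since those are exactly the \seaCK s. First I would record the easy half of the structure: because $C^*(E)$ is unital, the vertex set $E^0$ is finite, so the only things left to control are the edges, the sinks, and the sources. The strategy is therefore (i) use the rank hypothesis to kill sinks and infinite emitters, leaving a genuinely finite graph with no sinks, and then (ii) remove the sources.

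For step (i) I would compute the $K$-theory directly from the graph. For a graph with finitely many vertices there is an exact sequence
\begin{equation*}
0\longrightarrow K_1(C^*(E))\longrightarrow \mathbb{Z}^{E^0_{\mathrm{reg}}}\stackrel{\partial}{\longrightarrow}\mathbb{Z}^{E^0}\longrightarrow K_0(C^*(E))\longrightarrow 0,
\end{equation*}
where $E^0_{\mathrm{reg}}$ is the set of regular vertices (those emitting finitely many, but at least one, edges) and $\partial$ is the map $\mathbb{Z}^{E^0_{\mathrm{reg}}}\to\mathbb{Z}^{E^0}$ given by the columns of $I-A_E^t$ indexed by the regular vertices. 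Since $K_1=\ker\partial$ and $K_0=\mathrm{coker}\,\partial$, counting ranks over $\mathbb{Q}$ yields
\begin{equation*}
\searank K_0(C^*(E))-\searank K_1(C^*(E))=|E^0|-|E^0_{\mathrm{reg}}|,
\end{equation*}
which is exactly the number of sinks plus the number of infinite emitters. Thus the hypothesis $\searank K_0=\searank K_1$ forces $E^0=E^0_{\mathrm{reg}}$, so $E$ has no sinks and no infinite emitters; being row-finite with finitely many vertices of finite out-degree, it then has only finitely many edges. After this step $E$ is a genuinely finite graph with no sinks.

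For step (ii), if $v$ is a source then $p=\sum_{w\neq v}p_w$ is a full projection (each $p_w$, and hence $p_v=\sum_{s(e)=v}s_es_e^*$, lies in the ideal it generates), and the corner $pC^*(E)p$ is canonically $C^*(E\setminus v)$. Deleting sources one at a time — this never creates a sink and leaves the vertices lying on cycles untouched — terminates in a nonempty finite graph $F$ with no sinks and no sources, that is, a \seaCK{} $O_A$ realized as a full corner of $C^*(E)$. By Brown--Green--Rieffel this exhibits $C^*(E)$ as stably isomorphic to the \seaCK{} $O_A$.

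The hard part, and what I expect to be the main obstacle, is to upgrade this stable isomorphism to an honest isomorphism onto a \seaCK. Deleting a source alters the class of the unit in $K_0$ by $[p_v]$, so $C^*(E)$ and $O_A$ need not be isomorphic on the nose. I see two routes. The combinatorial one avoids deletion altogether: instead of removing a regular source, one absorbs it into its range by a sequence of in/out-splittings and reductions chosen to preserve the isomorphism class of $C^*(E)$ while strictly decreasing the number of sources — just as a graph consisting of a self-loop at $u$ together with a source edge $v\to u$ is converted into the length-two cycle on $\{u,v\}$, both giving $M_2(C(\mathbb{T}))$. The obstacle here is to show that such an isomorphism-preserving move exists for an \emph{arbitrary} regular source, not merely in favorable configurations. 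The alternative route is to realize the triple $(K_0,[1],K_1)$ of the unital algebra $C^*(E)$ inside a \seaCK{} via a classification argument, the crux being precisely that the unital corner of $M_n(O_A)$ cut out by the class $[1_{C^*(E)}]$ is again a \seaCK. Either way, passing from Morita equivalence with a \seaCK{} to isomorphism with one is where the real content lies.
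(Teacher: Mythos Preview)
The survey does not prove this theorem; it is quoted from \cite{sea:ar} without argument, so there is no in-paper proof to compare against. Your step~(i) is correct: the rank identity $\searank K_0 - \searank K_1 = |E^0| - |E^0_{\mathrm{reg}}|$ read off from the exact sequence forces every vertex to be regular, so $E$ is finite with no sinks. Your step~(ii) is also correct and produces a stable isomorphism between $C^*(E)$ and a genuine Cuntz--Krieger algebra $O_A$.

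The gap you flag --- upgrading this stable isomorphism to an actual isomorphism --- is real, and it is exactly the content of the \emph{next} theorem the survey quotes from the same source: a unital $C^*$-algebra stably isomorphic to a Cuntz--Krieger algebra is itself isomorphic to one. So rather than leaving a hole, you have correctly reduced the first cited result to the second. Your ``classification route'' (realize the prescribed unit class by a Cuntz--Krieger algebra within the given stable-isomorphism class) is essentially how that second theorem is established in \cite{sea:ar}; the purely combinatorial source-absorption you sketch would be a pleasant self-contained alternative, but, as you anticipate, producing an isomorphism-preserving move that eliminates an \emph{arbitrary} regular source is where that route gets stuck.
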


\begin{theorem}[{\cite{sea:ar}}]
Let $A$ be a unital \seacsa{} and assume that $A$ is stably isomorphic to a \seaCK{}.  Then $A$ is isomorphic to a \seaCK{}.
\end{theorem}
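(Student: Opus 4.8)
The plan is to deduce the statement from the preceding theorem by showing that $A$ is itself a unital graph algebra satisfying the rank equality, and then invoking that result. First I would verify that the rank condition transfers. Since $A$ is stably isomorphic to a \seaCK{} $O_B$ and since $K_0$ and $K_1$ are invariant under stable isomorphism, it suffices to note that every \seaCK{} satisfies $\searank K_0 = \searank K_1$. Writing $B$ as an $n\times n$ matrix, the standard computation gives $K_0(O_B)\cong\operatorname{coker}(I-B^{t})$ and $K_1(O_B)\cong\ker(I-B^{t})$, so that
\[
\searank K_0(O_B)=n-\searank(I-B^{t})=\dim\ker(I-B^{t})=\searank K_1(O_B).
\]
Hence $A$ also satisfies $\searank K_0(A)=\searank K_1(A)$. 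As $A$ is unital by hypothesis, the preceding theorem then reduces the problem to proving that $A$ is a graph algebra.

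To exhibit $A$ as a graph algebra, I would first realize its stabilization as one. Writing $O_B=C^{*}(E)$ for a finite graph $E$ with no sinks or sources, the stabilization $O_B\otimes\mathcal K$ is again a graph algebra $C^{*}(F)$: for any graph one may realize the stabilization of its graph algebra as the graph algebra of the graph obtained by attaching an infinite head to each vertex, a construction that creates no sinks or sources. Since $A$ is unital, the projection $1_A\otimes e_{11}$ is full in $A\otimes\mathcal K\cong C^{*}(F)$, and therefore $A$ is isomorphic to a full corner $P\,C^{*}(F)\,P$ for a suitable projection $P\in C^{*}(F)$.

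The main obstacle is thus to show that such a full corner of a graph algebra is again a graph algebra. The strategy I would follow is to reduce $P$, up to Murray--von Neumann equivalence, to a finite sum of vertex projections: one applies to $F$ a sequence of moves (out-splittings, in-splittings, and the Cuntz splice) that preserve both the isomorphism class of the graph algebra and the property of being a graph algebra, arranging that the class $[P]\in K_0(C^{*}(F))$ is represented by a genuine sum $p_{v_1}+\cdots+p_{v_k}$ of vertex projections; fullness of $P$ together with unitality of $A$ guarantee that this sum is finite and full. One then checks, by an explicit construction, that the corner of a graph algebra cut down by a finite full sum of vertex projections is itself a graph algebra $C^{*}(F')$. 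Controlling the Murray--von Neumann equivalence in the possibly non-purely-infinite setting, where equality of $K_0$-classes need not force equivalence of projections, is the delicate point, and it is here that the combinatorial graph-move machinery does the real work. Granting this, $A\cong C^{*}(F')$ is a unital graph algebra with $\searank K_0=\searank K_1$, and the preceding theorem yields that $A$ is isomorphic to a \seaCK{}.
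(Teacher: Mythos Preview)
The survey paper does not supply a proof of this theorem at all; it is quoted from \cite{sea:ar} and left unproved here, so there is no in-text argument to compare against. That said, your overall plan---reduce to the preceding theorem by exhibiting $A$ as a unital graph algebra with $\searank K_0=\searank K_1$, via realizing $A$ as a full corner of the stabilized graph algebra---is exactly the strategy of \cite{sea:ar}, and your verification of the rank condition is fine.

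There is, however, a genuine weakness in your sketch of the hard step. You propose to use out-splittings, in-splittings, \emph{and the Cuntz splice} as moves that ``preserve the isomorphism class of the graph algebra''. Out- and in-splittings are safe (they give explicit stable isomorphisms), but the Cuntz splice is not a move of that kind: invariance of the graph algebra under Cuntz splice is a deep statement that is only known via classification, precisely the sort of machinery one is trying to avoid in an intrinsic corner argument, and it is not available in the generality you need here (the theorem does not assume real rank zero or pure infiniteness). The argument in \cite{sea:ar} does not invoke the Cuntz splice; it handles the reduction of an arbitrary full projection to a vertex-projection corner by more elementary graph manipulations together with a direct analysis of projections in graph algebras. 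So your outline is right in shape, but the specific toolkit you name would not close the gap; you should replace the Cuntz splice by the concrete corner/vertex-projection arguments actually carried out in \cite{sea:ar}.
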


As a small corollary to the work of Eberhard Kirchberg on $KK(X)$-theory, phantom \seaCK s cannot have vanishing $K$-theory.

\begin{theorem}[{\cite{sea:kirchberg}}] \label{sea:thmkirchberg}
Let $A$ and $B$ be unital, nuclear, separable \seacsa s with primitive ideal space $X$.  Then $A\otimes{O_2}$ and $B\otimes{O_2}$ are isomorphic.
\end{theorem}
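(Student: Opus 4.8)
The plan is to deduce the statement from Eberhard Kirchberg's classification of $O_2$-absorbing \seacsa s over a space, for which the primitive ideal space alone is a complete invariant. Concretely, I would verify that $A\otimes O_2$ and $B\otimes O_2$ lie in the scope of Kirchberg's theorem, that they carry the same primitive ideal space $X$, and that the theorem then forces them to be isomorphic.

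First I would check the structural hypotheses. Being minimal tensor products of separable nuclear \seacsa s, both $A\otimes O_2$ and $B\otimes O_2$ are separable and nuclear, and they are unital since $A$, $B$, and $O_2$ are. Using the Kirchberg isomorphism $O_2\otimes O_2\cong O_2$ we get $(A\otimes O_2)\otimes O_2\cong A\otimes(O_2\otimes O_2)\cong A\otimes O_2$, so $A\otimes O_2$ is $O_2$-absorbing, and likewise $B\otimes O_2$. In particular both are $O_\infty$-absorbing, hence strongly purely infinite, which places them in the class to which Kirchberg's $KK(X)$-classification applies.

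Next I would identify the primitive ideal spaces. Since $O_2$ is simple and nuclear and $A$ is exact, every ideal of $A\otimes O_2$ has the form $I\otimes O_2$ for an ideal $I$ of $A$, so $\seaPrim(A\otimes O_2)$ is canonically homeomorphic to $\seaPrim(A)=X$; the same applies to $B$. Thus $A\otimes O_2$ and $B\otimes O_2$ are \seacsa s over the same space $X$, with the tautological structure maps.

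Finally I would invoke Kirchberg's theorem. For separable nuclear $O_2$-absorbing \seacsa s the $KK(X)$-invariant degenerates: tensoring by $O_2$ annihilates all $K$-theoretic data, so any two such algebras over the same $X$ are $KK(X)$-equivalent, and Kirchberg's uniqueness theorem then upgrades this $KK(X)$-equivalence to an $X$-equivariant $*$-isomorphism. Applied to $A\otimes O_2$ and $B\otimes O_2$ this yields the desired isomorphism. The main obstacle is exactly this last input, which rests on the full strength of Kirchberg's machinery: one must know both that $O_2$-stability makes the $KK(X)$-class trivial in the equivariant setting and that equivariant $KK(X)$-equivalences lift to honest isomorphisms. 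A minor additional point is that the theorem is most cleanly stated for stable algebras, so one passes to the stabilizations, applies it, and descends to a unital isomorphism by matching the full unit projections---harmless here because the vanishing $K_0$ leaves no obstruction to this matching.
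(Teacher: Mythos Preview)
The paper does not supply its own proof of this theorem: it is stated as a citation to Kirchberg's work, with no accompanying argument. So there is nothing to compare your sketch against in the paper itself.

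That said, your outline is a faithful account of how the result sits inside Kirchberg's framework. The verification that $A\otimes O_2$ and $B\otimes O_2$ are unital, separable, nuclear, and $O_2$-absorbing is correct, and the identification $\seaPrim(A\otimes O_2)\cong\seaPrim(A)=X$ via simplicity and exactness of $O_2$ is the right move. One small caution: the claim that ``the $KK(X)$-invariant degenerates'' because $O_2$ kills $K$-theory is morally right but deserves care---vanishing of the filtered $K$-groups of all subquotients is not by itself the same as $KK(X)$-contractibility, and the clean way to get the conclusion is to appeal directly to Kirchberg's theorem in its $O_2$-absorbing form (where the primitive ideal space alone is the complete invariant), rather than routing through an explicit $KK(X)$-equivalence. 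Your last paragraph essentially acknowledges this: the real content is Kirchberg's machinery, and your sketch is an explanation of how the stated theorem is a specialization of it, not an independent proof. That is entirely appropriate here, since the paper treats the result as a black box as well.
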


\begin{corollary}
Let $A$ be a \seacsa{} that \sealookslike{}, and assume that $K_*(A)=0$.  Then $A$ is a Cuntz-Krieger algebra.
\end{corollary}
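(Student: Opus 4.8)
The plan is to combine a short filtered-$K$-theory computation with Theorem~\ref{sea:thmkirchberg}. First I would show that the hypothesis $K_*(A)=0$ propagates to \emph{every} subquotient of $A$, so that the entire filtered $K$-theory of $A$ is trivial; then I would invoke Kirchberg's $KK(X)$-machinery to conclude that $A$ absorbs $O_2$, and finally compare $A$ with a genuine \seaCK{} over $X$ by way of Theorem~\ref{sea:thmkirchberg}.

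For the first step, fix an open subset $U$ of a locally closed set $Y$ and consider the extension $0\to A(U)\to A(Y)\to A(Y\setminus U)\to 0$. Since $A$ has real rank zero, Theorem~3.14 of~\cite{sea:brownpedersen} forces the exponential map $K_0(A(Y\setminus U))\to K_1(A(U))$ to vanish, and the six-term exact sequence collapses to
\begin{equation*}
0\to K_1(A(U))\to K_1(A(Y))\to K_1(A(Y\setminus U))\to K_0(A(U))\to K_0(A(Y))\to K_0(A(Y\setminus U))\to 0.
\end{equation*}
Taking $Y=X$ and feeding in $K_*(A(X))=K_*(A)=0$, exactness immediately gives $K_1(A(U))=0$, $K_0(A(X\setminus U))=0$, and an isomorphism $K_1(A(X\setminus U))\cong K_0(A(U))$. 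Because (\ref{sea:pointthree}) of Definition~\ref{sea:defphantoms} holds for all locally closed subsets, $K_1(A(X\setminus U))$ is free of rank $\searank K_0(A(X\setminus U))=0$, hence $K_1(A(X\setminus U))=0$, and therefore $K_0(A(U))=0$ as well. Thus $K_*(A(U))=0$ and $K_*(A(X\setminus U))=0$ for every open $U$. Running the same argument once more inside $A(U)$ for the extension $0\to A(V)\to A(U)\to A(U\setminus V)\to 0$ — its subquotients being subquotients of $A$, so the freeness, rank, and vanishing-exponential hypotheses all persist — yields $K_*(A(Y))=0$ for an arbitrary locally closed $Y=U\setminus V$.

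With the whole filtered $K$-theory of $A$ trivial, $A$ is $KK(X)$-trivial, and here I would invoke Kirchberg's classification to conclude $A\cong A\otimes O_2$; the same reasoning applies to any \seaCK{} over $X$ whose filtered $K$-theory vanishes. It then remains to produce such a \seaCK. Every finite $T_0$ space is the primitive ideal space of a real rank zero \seaCK, realized by a finite graph satisfying Krieger's condition (K); choosing the graph so that each strongly connected component is a single vertex carrying two loops makes every simple subquotient isomorphic to $O_2$, and the resulting vertex matrix $B$ has $I-B^t$ block-triangular with $-1$ on the diagonal, so every subquotient $K$-group vanishes. This gives a \seaCK{} $B$ over $X$ with $B\cong B\otimes O_2$. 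By Theorem~\ref{sea:thmkirchberg}, $A\otimes O_2\cong B\otimes O_2$, whence
\begin{equation*}
A\cong A\otimes O_2\cong B\otimes O_2\cong B,
\end{equation*}
and $A$ is a \seaCK.

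The $K$-theory bookkeeping in the first step is routine, and the graph construction is a mild range computation. The real weight sits in the passage from vanishing filtered $K$-theory to $O_2$-absorption, $A\cong A\otimes O_2$: this is exactly where Kirchberg's $KK(X)$-classification does the work. I expect the main obstacle to be verifying that $A$ genuinely falls within the scope of those theorems — in particular that real rank zero together with pure infiniteness supplies the strong pure infiniteness the classification requires — and, secondarily, confirming that the graph realizing $X$ can indeed be taken with trivial filtered $K$-theory.
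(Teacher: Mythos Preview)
Your overall architecture matches the paper's: reduce to vanishing $K$-theory on the simple subquotients, pass to $O_2$-absorption, build a \seaCK{} model over $X$ with the same property, and apply Theorem~\ref{sea:thmkirchberg}. The difference lies in how $O_2$-absorption is obtained. The paper only needs $K_*(A(x))=0$ for the \emph{singletons} $x\in X$; each $A(x)$ is then a UCT Kirchberg algebra with trivial $K$-theory, hence $O_2$-absorbing, and Theorem~4.3 of~\cite{sea:tomswinter} (permanence of absorption of a strongly self-absorbing algebra under extensions) is applied finitely many times to conclude that $A$ itself is $O_2$-absorbing. Your route instead pushes the vanishing to every locally closed $Y$, declares the filtered $K$-theory trivial, and then wants to extract $KK(X)$-triviality and feed it into Kirchberg's classification. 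That step is not automatic: for general $X$ filtered $K$-theory does not detect $KK(X)$-equivalence, so ``filtered $K$-theory trivial $\Rightarrow$ $KK(X)$-trivial'' requires an additional argument (e.g.\ an induction using that each $A(x)$ is already $KK$-contractible). It can be made to work, but it is heavier machinery than needed. The Toms--Winter route also dissolves the very obstacle you flag at the end: once $A\cong A\otimes O_2$ is obtained via permanence, no appeal to strong pure infiniteness or to the fine print of the $KK(X)$-classification is required before invoking Theorem~\ref{sea:thmkirchberg}.
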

\begin{proof}
Let $X$ denote the finite primitive ideal space of $A$.  Since $K_*(A)=0$ and $A$ \sealookslike{}, $K_*(A(x))=0$ for all $x\in X$.
So for all $x\in X$, $A(x)$ is  $O_2$-absorbing since it is a UCT Kirchberg algebra with vanishing $K$-theory.  By applying Theorem~4.3 of~\cite{sea:tomswinter} finitely many times, we see that $A$ itself is $O_2$-absorbing.  Let $O_B$ be a Cuntz-Krieger algebra with primitive ideal space $X$ and with $O_B(x)$ (stably) isomorphic to $O_2$ for all $x\in X$.  Then by Theorem~\ref{sea:thmkirchberg}, $A$ is isomorphic to $O_B$.
\end{proof}

\section{Using filtered $K$-theory}

Via $K$-theoretic classification results it can be established that a phantom Cuntz-Krieger algebra cannot have a so-called accordion space as its primitive ideal space.
We will first restrict to the cases where the primitive ideal space has atmost 2 points in order to describe the historical development and due to the importance and powerfulness of the results needed.
The most crucial result is by Eberhard Kirchberg who showed in~\cite{sea:kirchberg} that for stable, purely infinite, nuclear, separable \seacsa s $A$ and $B$ with finite primitive ideal space $X$, any $KK(X)$-equivalence between $A$ and $B$ lift to a $*$-isomorphism.

Simple \seacsa s that \sealooklike{} are UCT Kirchberg algebras, hence the classification result by Eberhard Kirchberg and N.~Christoffer Phillips applies.
For a unital \seacsa{} $A$ with unit $1_A$, denote by $[1_A]$ the class of $1_A$ in $K_0(A)$.
For unital \seacsa s $A$ and $B$ an isomorphism from $(K_*(A),[1_A])$ to $(K_*(B),[1_B])$ is defined as a pair $(\phi_{0},\phi_{1})$ of group isomorphisms $\phi_{i}\colon K_{i}(A)\to K_{i}(B)$, $i=0,1$, for which $\phi_{0}([1_{A}])=\phi_{0}([1_{B}])$.
\begin{theorem}[{\cite{sea:kirchbergphillips}}] \label{sea:thmkirchbergphillips}
Let $A$ and $B$ be unital, simple, purely infinite, nuclear, separable \seacsa s in the bootstrap class.
If $(K_*(A),[1_A])$ and $(K_*(B),[1_B])$ are isomorphic, then $A$ and $B$ are isomorphic.
\end{theorem}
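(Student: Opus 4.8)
The plan is to decompose the proof into the two steps that form the backbone of the classification programme: first convert the $K$-theoretic data into a $KK$-equivalence by means of the Universal Coefficient Theorem, and then lift that $KK$-equivalence to a $*$-isomorphism using Kirchberg's absorption theory for purely infinite algebras. Both $A$ and $B$ are unital, simple, purely infinite, nuclear, separable \seacsa s in the bootstrap class, i.e.\ unital UCT Kirchberg algebras, and it is precisely this package of hypotheses that makes both steps available.

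For the first step I would invoke the Universal Coefficient Theorem of Rosenberg and Schochet, which for \seacsa s in the bootstrap class supplies a natural short exact sequence
\[
0 \to \mathrm{Ext}^1_{\mathbb Z}\!\bigl(K_*(A), K_{*+1}(B)\bigr) \to KK(A,B) \to \mathrm{Hom}\bigl(K_*(A), K_*(B)\bigr) \to 0 .
\]
The given pair $(\phi_0,\phi_1)$ is an invertible element of $\mathrm{Hom}(K_*(A),K_*(B))$, and it is a standard consequence of the UCT that an isomorphism of $\mathbb Z/2$-graded $K$-theory between bootstrap algebras is induced by a $KK$-equivalence; applying this lifts $(\phi_0,\phi_1)$ to an invertible class $x\in KK(A,B)^{-1}$, so that $A$ and $B$ are $KK$-equivalent. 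By the naturality of the UCT projection the induced maps $x_*$ on $K_0$ and $K_1$ coincide with $\phi_0$ and $\phi_1$; in particular $x_*([1_A])=\phi_0([1_A])=[1_B]$, which is the bookkeeping that lets me recover the unit at the end.

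For the second step I would stabilise and use the fact, recorded above from Kirchberg's work (for a one-point space $KK(X)=KK$), that a $KK$-equivalence between stable, purely infinite, nuclear, separable \seacsa s is induced by a $*$-isomorphism. Since $KK$ is stable, $x$ determines a $KK$-equivalence $A\otimes\mathbb K\simeq B\otimes\mathbb K$, whence there is an isomorphism $\Phi\colon A\otimes\mathbb K\to B\otimes\mathbb K$ with $KK(\Phi)=x$ and therefore $\Phi_*=x_*$ on $K_0$. Writing $e$ for a rank-one projection, the corner $(1_A\otimes e)(A\otimes\mathbb K)(1_A\otimes e)$ is $A$, and $\Phi(1_A\otimes e)$ is a projection in $B\otimes\mathbb K$ with $[\Phi(1_A\otimes e)]=x_*([1_A])=[1_B]=[1_B\otimes e]$. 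As $B\otimes\mathbb K$ is purely infinite and simple, Cuntz's theorem that $K_0$ classifies projections up to Murray--von Neumann equivalence gives $\Phi(1_A\otimes e)\sim 1_B\otimes e$; conjugating by an implementing partial isometry identifies the two corners, and chaining the isomorphisms yields $A\cong B$.

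The conceptually and technically hard part is hidden entirely in the lifting result used in the second step: the statement that a $KK$-equivalence between Kirchberg algebras is realised by a $*$-isomorphism rests on Kirchberg's existence and uniqueness theorems, that is, on the facts that Kirchberg algebras are $O_\infty$-absorbing and that two full $*$-homomorphisms inducing the same $KK$-class are asymptotically unitarily equivalent, assembled into an isomorphism by an Elliott approximate-intertwining argument. By contrast the UCT step and the final passage from the stable to the unital conclusion are formal, the latter requiring only pure infiniteness, simplicity, and the careful tracking of the class of the unit through $x$.
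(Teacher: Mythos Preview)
The paper does not give a proof of this theorem; it is quoted as a known result from \cite{sea:kirchbergphillips} and used as a black box in the subsequent corollary. There is therefore nothing in the paper to compare your argument against.

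That said, your sketch is a faithful outline of the standard proof of the Kirchberg--Phillips classification: lift the $K$-theory isomorphism to a $KK$-equivalence via the UCT, invoke Kirchberg's theorem that $KK$-equivalences between stable Kirchberg algebras are realised by $*$-isomorphisms, and then descend from the stable to the unital statement by tracking $[1_A]$ and using that $K_0$ classifies nonzero projections in a simple purely infinite \seacsa. The only caveat is that you correctly flag the deep input---Kirchberg's existence and uniqueness theorems---as a black box, which is exactly how the survey paper treats the whole theorem.
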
 

The range of $K_{*}$ for graph algebras has been determined by Wojciech Szyma\'nski, and his result has been extended by S\o ren Eilers, Takeshi Katsura, Mark Tomforde, and James West to include the class of the unit.

\begin{theorem}[{\cite{sea:ektw}}] \label{sea:thmszymanski}
Let $G$ and $F$ be finitely generated groups, let $g\in G$, and assume that $F$ is free and that $\searank G=\searank F$.
Then there exists a simple \seaCK{} $O_A$ of real rank zero realising $(G\oplus F, g)$ as $(K_*(O_A),[1_{O_A}])$.
\end{theorem}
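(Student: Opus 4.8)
The plan is to translate the statement into a matrix realization problem via the standard computation of the $K$-theory of a \seaCK. Recall that for an $n\times n$ matrix $A$ with entries in $\{0,1\}$ one has $K_0(O_A)\cong\operatorname{coker}(A^t-I)$ and $K_1(O_A)\cong\ker(A^t-I)$, and that $[1_{O_A}]$ is the image in the cokernel of the all-ones vector $(1,\ldots,1)^t$. The first point to notice is that the rank hypothesis is forced rather than merely convenient: writing $M=A^t-I$, both $\operatorname{coker}M$ and $\ker M$ have free rank $n-\searank_{\mathbb Q}M$, and $\ker M$ is a subgroup of $\mathbb Z^n$, hence free. Consequently, once $G$ is realised as $\operatorname{coker}M$ with the correct free rank $\searank G$, the kernel is automatically free of rank $\searank G=\searank F$, so $\ker M\cong F$ comes for free and only the cokernel and the unit class need to be controlled directly.

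Second, I would realise $G$ as a cokernel. By the structure theorem write $G\cong\mathbb Z^{r}\oplus\mathbb Z/d_1\oplus\cdots\oplus\mathbb Z/d_s$ with $r=\searank G$; the integer diagonal matrix $\operatorname{diag}(d_1,\ldots,d_s,0,\ldots,0)$ has precisely this cokernel. The task is then to replace this matrix by one of the special form $A^t-I$ with $A$ a legitimate Cuntz-Krieger matrix, without altering the cokernel or kernel. Elementary row and column operations over $\mathbb Z$ preserve both invariants, and enlarging the matrix by adjoining vertices changes them in a controlled way; combining these with the standard moves of symbolic dynamics and graph $C^*$-algebra theory (out-splitting and the Cuntz splice) lets one steer an arbitrary integer matrix to one whose off-diagonal entries lie in $\{0,1\}$ and whose diagonal entries lie in $\{-1,0\}$, i.e. to the form $A^t-I$.

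Third, I would pin down the unit class and the dynamical conditions simultaneously. Throughout the reduction one tracks the image of the all-ones vector, since each elementary operation and each move has an explicit, computable effect on it; the freedom to adjoin vertices gives enough room to land the unit class on the prescribed $g$. Finally, to ensure that $O_A$ is simple and of real rank zero one forces $A$ to be irreducible and to satisfy condition (II): irreducibility is arranged by connecting the vertices, and condition (II) by a Cuntz splice, which supplies the required exits to make $O_A$ purely infinite while leaving $K_*(O_A)$ unchanged and having a computable effect on $[1_{O_A}]$.

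The main obstacle is the simultaneous bookkeeping. Each of the four requirements --- $\operatorname{coker}(A^t-I)\cong G$, entries of $A$ in $\{0,1\}$, irreducibility together with condition (II), and the exact value $g$ of the unit class --- is individually attainable, but the moves that repair one property generically perturb the unit class. The delicate point is therefore to organise the sequence of moves so that its net effect on the all-ones vector can be computed and driven to the prescribed $g$; realising an \emph{arbitrary} $g$, rather than one distinguished unit class, is exactly the refinement of the Eilers--Katsura--Tomforde--West result over Szyma\'nski's original range computation.
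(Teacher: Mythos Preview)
The paper does not contain a proof of this theorem: it is quoted as an external result from \cite{sea:ektw} (extending Szyma\'nski's range computation) and is used as a black box in the subsequent corollary. There is therefore no in-paper argument to compare your proposal against.

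That said, your outline is a faithful sketch of how the cited result is actually obtained. The identification $K_0(O_A)\cong\operatorname{coker}(A^t-I)$, $K_1(O_A)\cong\ker(A^t-I)$ with $[1_{O_A}]$ the image of the all-ones vector is the correct starting point, and your observation that freeness of $K_1$ and the rank equality are automatic once $G$ is realised as a cokernel is exactly right. The strategy---Smith normal form, then elementary operations and graph moves to reach a $\{0,1\}$-matrix while tracking the unit vector, with irreducibility and condition~(II) enforced at the end---is the one used in the literature. You also correctly locate the genuine difficulty: Szyma\'nski's argument gives the groups, and the contribution of \cite{sea:ektw} is precisely the bookkeeping that lets one hit an \emph{arbitrary} prescribed $g$ rather than whatever unit class the construction happens to produce. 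Your sketch stops short of carrying out that bookkeeping, but as a plan it is sound and matches the approach of the source you would be reproving.
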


\begin{corollary}
Simple phantom \seaCK s do not exist.
\end{corollary}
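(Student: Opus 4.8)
The plan is to combine the two black-box inputs already recalled in the excerpt---the classification theorem of Kirchberg and Phillips (Theorem~\ref{sea:thmkirchbergphillips}) and the range computation of Eilers, Katsura, Tomforde, and West (Theorem~\ref{sea:thmszymanski})---so that the corollary reduces to matching a complete invariant against its known range. The guiding idea is that in the simple case the ``\sealooklike{}'' conditions are exactly the defining data of a unital UCT Kirchberg algebra together with the arithmetic constraints of Theorem~\ref{sea:thmszymanski}, and those constraints are precisely what that theorem can realise by a \seaCK{}.

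First I would unwind what it means for a simple \seacsa{} to \sealookslike{}. Let $A$ be such an algebra. Since $A$ is simple, its primitive ideal space $X$ is a single point $x$, whose only open subsets are $\emptyset$ and $X$; hence the only locally closed subquotient is $A(x)=A(X)/A(\emptyset)=A$. The conditions of Definition~\ref{sea:defphantoms} therefore collapse to statements about $A$ itself: $A$ is a unital, purely infinite, nuclear, separable \seacsa{} of real rank zero lying in the bootstrap class, with $K_*(A)$ finitely generated, $K_1(A)$ free, and $\searank K_0(A)=\searank K_1(A)$. In other words, $A$ is a unital UCT Kirchberg algebra whose $K$-theory meets the arithmetic constraints of Theorem~\ref{sea:thmszymanski}.

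Next I would manufacture a \seaCK{} carrying the same invariant. Setting $G=K_0(A)$, $F=K_1(A)$, and $g=[1_A]$, the hypotheses of Theorem~\ref{sea:thmszymanski} hold, so there is a simple real rank zero \seaCK{} $O_A$ with $(K_*(O_A),[1_{O_A}])\cong(K_*(A),[1_A])$. As a simple real rank zero \seaCK{} it is purely infinite (by the dichotomy recalled in the introduction) and, being a graph algebra, is unital, nuclear, separable, and in the bootstrap class; thus $O_A$ is again a unital UCT Kirchberg algebra. Applying Theorem~\ref{sea:thmkirchbergphillips} to the pair $A$ and $O_A$, whose pointed $K$-theories agree by construction, yields $A\cong O_A$, so $A$ is isomorphic to a \seaCK{} and no simple phantom \seaCK{} can exist.

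I do not expect a genuine obstacle: all the difficulty is absorbed into the two cited theorems, and the remaining argument is pure assembly. The one point deserving care is the verification that the \seaCK{} supplied by Theorem~\ref{sea:thmszymanski} is itself a UCT Kirchberg algebra, so that Theorem~\ref{sea:thmkirchbergphillips} may legitimately be invoked; this is immediate from the purely-infinite-versus-real-rank-zero dichotomy for \seaCK s together with the standard facts that graph algebras are nuclear, separable, and satisfy the UCT.
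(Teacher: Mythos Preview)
Your proposal is correct and follows essentially the same approach as the paper: use Theorem~\ref{sea:thmszymanski} to realise $(K_*(A),[1_A])$ by a simple real rank zero \seaCK{}, then apply the Kirchberg--Phillips classification (Theorem~\ref{sea:thmkirchbergphillips}) to conclude $A$ is isomorphic to that \seaCK{}. Your version merely spells out in more detail why the ``\sealookslike{}'' hypotheses collapse to the inputs of these two theorems in the simple case; one cosmetic point is that you name the constructed \seaCK{} $O_A$, which clashes with the given algebra $A$---the paper calls it $O_B$.
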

\begin{proof}
Let $A$ be a simple \seacsa{} that \sealookslike{}.  By Theorem~\ref{sea:thmszymanski}, there exists a \seaCK{} $O_B$ of real rank zero for which $(K_*(A),[1_A])\cong(K_*(O_B),[1_{O_B}])$.
Since $A$ and $O_B$ are UCT Kirchberg algebras, it follows from Theorem~\ref{sea:thmkirchbergphillips} that $A$ and $O_B$ are isomorphic.
\end{proof}

For \seacsa s with exactly one nontrivial ideal, the suitable invariant seems to be the induced six-term exact sequence in $K$-theory.  
\begin{definition} 
Let $X_\seasix$ denote the space $\{1,2\}$ with $\{2\}$ open and $\{1\}$ not open.
For a \seacsa{} $A$ with primitive ideal space $X_\seasix$, $K_\seasix(A)$ is defined as the groups and maps
\[ \xymatrix@R=12pt{
K_0(A(2))\ar[r]^-{i} & K_0(A) \ar[r]^-{r} & K_0(A(1)) \ar[d]^-{\delta} \\
K_1(A(1)) \ar[u]^-{\delta} & K_1(A) \ar[l]^-{r} & K_1(A(2)) \ar[l]^-{i} 
} \]
induced by the extension $0\to A(2)\to A\to A(1)$.
For unital \seacsa s $A$ and $B$ with primitive ideal space $X_\seasix$, an isomorphism from $(K_\seasix(A),[1_A])$ to $(K_\seasix(B),[1_B])$ is defined as a triple $(\phi_*^{\{2\}},\phi_*^{X_\seasix},\phi_*^{\{1\}})$ of graded isomorphisms $\phi_*^Y\colon K_*(A(Y))\to K_*(B(Y))$, $Y\in\{\{2\},X_\seasix,\{1\}\}$, that commute with the maps $i$, $r$, and $\delta$ and satisfies $\phi_0^{X_\seasix}([1_A])=[1_B]$.
\end{definition}
This invariant was originally introduced by Mikael R\o rdam to classify \seaCK s with exactly one nontrivial ideal up to stable isomorphism.
Alexander Bonkat established a UCT for $K_\seasix$ (that was later generalized by Ralf Meyer and Ryszard Nest), and by combining his UCT with the result of Eberhard Kirchberg (and a result by Gunnar Restorff and Efren Ruiz in~\cite{sea:rr} to achieve unital and not stable isomorphism) one obtains the following theorem.

\begin{theorem}[{\cite{sea:bonkat,sea:kirchberg}}]
Let $A$ and $B$ be unital, purely infinite, nuclear, separable \seacsa s with primitive ideal space $X_\seasix$, and assume that $A(x)$ and $B(x)$ are in the bootstrap class for all $x\in\{1,2\}$.
Then $(K_\seasix(A),[1_A])\cong(K_\seasix(B),[1_B])$ implies $A\cong B$.
\end{theorem}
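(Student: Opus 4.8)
The plan is to route the argument through $KK(X_\seasix)$-theory in three stages: first convert the given isomorphism of the $K_\seasix$-data into a $KK(X_\seasix)$-equivalence by means of Bonkat's universal coefficient theorem, then lift that equivalence to a $*$-isomorphism of the stabilisations by Kirchberg's theorem, and finally upgrade the resulting stable isomorphism to a unital one using the result of Restorff and Ruiz together with the fact that the class of the unit is preserved. The hypotheses that $A(x)$ and $B(x)$ lie in the bootstrap class are exactly what make the first stage available, and the hypothesis $\phi_0^{X_\seasix}([1_A])=[1_B]$ is exactly what powers the last.

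For the first stage, Bonkat's UCT supplies a natural short exact sequence
\[ 0\to \mathrm{Ext}^{1}\bigl(K_\seasix(A),K_\seasix(B)\bigr)\to KK(X_\seasix)(A,B)\to \mathrm{Hom}\bigl(K_\seasix(A),K_\seasix(B)\bigr)\to 0, \]
where $\mathrm{Hom}$ and $\mathrm{Ext}^{1}$ are formed in the category of six-term exact sequences. I would use surjectivity of the right-hand map to lift the given isomorphism $\alpha\colon K_\seasix(A)\to K_\seasix(B)$ to an element $\kappa\in KK(X_\seasix)(A,B)$ with $K_\seasix(\kappa)=\alpha$. To see that $\kappa$ is invertible, I would exploit naturality of the UCT: for every separable $C^{*}$-algebra $D$ over $X_\seasix$, right multiplication by $\kappa$ gives a map of the two UCT sequences for $KK(X_\seasix)(D,A)$ and $KK(X_\seasix)(D,B)$ whose outer terms are the isomorphisms induced by $\alpha$ on $\mathrm{Hom}$ and on $\mathrm{Ext}^{1}$; the five lemma then forces $KK(X_\seasix)(D,A)\to KK(X_\seasix)(D,B)$ to be an isomorphism for all such $D$, and taking $D=B$ and $D=A$ produces a two-sided inverse for $\kappa$. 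Thus $\kappa$ is a $KK(X_\seasix)$-equivalence.

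Since $KK(X_\seasix)$ is invariant under tensoring with the compacts, $\kappa$ is equally a $KK(X_\seasix)$-equivalence between $A\otimes\mathbb{K}$ and $B\otimes\mathbb{K}$. These are stable, purely infinite, nuclear and separable with primitive ideal space $X_\seasix$, so Kirchberg's lifting theorem produces a $*$-isomorphism $\Phi\colon A\otimes\mathbb{K}\to B\otimes\mathbb{K}$ over $X_\seasix$ realising $\kappa$. At this point $A$ and $B$ are stably isomorphic over $X_\seasix$ by an isomorphism whose induced action on $K_\seasix$ is $\alpha$, and hence in particular respects the class of the unit.

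The third stage is where the unit-class hypothesis enters, and it is the main obstacle, since a stable isomorphism need not restrict to a unital one. Here I would invoke \cite{sea:rr}. Because $\alpha$ carries $[1_A]$ to $[1_B]$, the projection $\Phi(1_A\otimes e_{11})$ has the same class in $K_0(B\otimes\mathbb{K})$ as $1_B\otimes e_{11}$; in the purely infinite setting equal $K_0$-classes give Murray-von Neumann equivalent projections, and conjugating by an implementing partial isometry identifies the two corners, yielding a unital $*$-isomorphism $A\cong B$. The delicate point, and precisely what \cite{sea:rr} supplies, is that this adjustment and corner-cutting can be carried out compatibly with the filtration over $X_\seasix$, that is, respecting the ideal $A(2)$ and the quotient $A(1)$ simultaneously, so that the final unital isomorphism is genuinely an isomorphism over $X_\seasix$.
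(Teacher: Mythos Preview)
Your proposal is correct and follows exactly the route the paper itself sketches: Bonkat's UCT to lift the $K_\seasix$-isomorphism to a $KK(X_\seasix)$-equivalence, Kirchberg's theorem to obtain a stable $*$-isomorphism, and the Restorff--Ruiz result from \cite{sea:rr} to pass from stable to unital isomorphism using the unit class. The paper states the theorem as a citation with precisely this one-line outline and no further details, so you have simply fleshed out what the paper leaves implicit.
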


The range of $K_\seasix$ for graph algebras has been determined by S\o ren Eilers, Takeshi Katsura, Mark Tomforde, and James West.

\begin{theorem}[{\cite{sea:ektw}}] \label{sea:thmektw}
Let a six-term exact sequence 
\[ \xymatrix@R=4pt{
&G_1\ar[r] & G_2 \ar[r] & G_3 \ar[dd]^-{0} & \\
\mathcal E : && & \\
&F_3 \ar[uu] & F_2 \ar[l] & F_1 . \ar[l] & 
} \]
be given with $G_1,G_2,G_3$ and $F_1,F_2,F_3$  finitely generated groups, and let $g\in G_2$.
Assume that the groups $F_1,F_2,F_3$ are free, and that $\searank G_i=\searank F_i$ for all $i=1,2,3$.
Then there exists a \seaCK{} $O_A$ of real rank zero with primitive ideal space $X_\seasix$ realising $(\mathcal E,g)$ as $(K_\seasix(O_A),[1_{O_A}])$.
\end{theorem}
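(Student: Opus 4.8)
The plan is to translate the statement into the combinatorics of the adjacency matrix and to build $O_A$ as a graph algebra whose matrix is block upper-triangular. A \seaCK{} with primitive ideal space $X_\seasix$ is given by a finite $\{0,1\}$-matrix
\[ A=\begin{pmatrix} A_{11} & A_{12}\\ 0 & A_{22}\end{pmatrix}, \]
indexed by two vertex sets $V_1$ and $V_2$, where the diagonal blocks $A_{11}$ and $A_{22}$ are irreducible and satisfy Cuntz's condition~(II) (so the two simple subquotients are purely infinite) and where $A_{12}$ is chosen to make the extension essential (so the primitive ideal space is exactly $X_\seasix$). For such $A$ the quotient $O_A(1)$ is the simple \seaCK{} with matrix $A_{11}$ and the ideal $O_A(2)$ is stably isomorphic to the simple \seaCK{} with matrix $A_{22}$, while $K_0(O_A)=\operatorname{coker}(I-A^{t})$ and $K_1(O_A)=\ker(I-A^{t})$, with $[1_{O_A}]$ represented by the all-ones vector. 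Applying the snake lemma to the short exact sequence of abelian groups induced by the block decomposition of $I-A^{t}$ reproduces the six-term sequence $K_\seasix(O_A)$: the exponential map $K_0(O_A(1))\to K_1(O_A(2))$ vanishes automatically (the snake sequence is a non-cyclic six-term sequence), and the index map $\partial\colon K_1(O_A(1))\to K_0(O_A(2))$ is the homomorphism $\ker(I-A_{11}^{t})\to\operatorname{coker}(I-A_{22}^{t})$ induced by $-A_{12}^{t}$. Realising $\mathcal E$ thus reduces to choosing the three blocks.

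First I would realise the two ends. Since $\searank G_3=\searank F_3$, with $F_3$ free and the image $g_3$ of $g$ in $G_3$ prescribed, Theorem~\ref{sea:thmszymanski} produces a simple real rank zero \seaCK{} realising $(G_3\oplus F_3,g_3)$; its matrix is taken as $A_{11}$, so $\operatorname{coker}(I-A_{11}^{t})=G_3$ and $\ker(I-A_{11}^{t})=F_3$. Likewise, using $\searank G_1=\searank F_1$ and freeness of $F_1$, Theorem~\ref{sea:thmszymanski} yields a matrix $A_{22}$ with $\operatorname{coker}(I-A_{22}^{t})=G_1$ and $\ker(I-A_{22}^{t})=F_1$ (here the unit class of the ideal block is immaterial, since $O_A(2)$ is not unital). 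The rank hypotheses are exactly what allows each end to be realised by a square matrix and force the correct nullities, and freeness of the $F_i$ is necessary because $K_1$ of a \seaCK{} is always a kernel of an integer matrix.

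The heart of the argument, and the step I expect to be the main obstacle, is the choice of $A_{12}$. It must be a nonnegative (ultimately $\{0,1\}$) integer matrix that simultaneously (i) induces via $-A_{12}^{t}$ the prescribed index map $\partial\colon F_3\to G_1$ on $\ker(I-A_{11}^{t})$ modulo $\operatorname{im}(I-A_{22}^{t})$, and (ii) realises the prescribed extension class of $G_2$ in $0\to G_1/\operatorname{im}\partial\to G_2\to G_3\to 0$, which is governed by the remaining freedom in $A_{12}^{t}$ on a complement of $\ker(I-A_{11}^{t})$. Over the integers both tasks are easy, since $\ker(I-A_{11}^{t})$ is a direct summand of its ambient free group and one may prescribe homomorphisms on a basis; the difficulty is forcing $\{0,1\}$ entries while keeping $A_{11},A_{22}$ condition~(II) and the extension essential. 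I would handle this by first writing down a possibly large-entry integer matrix realising all of the data and then applying the graph moves (in- and out-splittings and expansions, as in Szyma\'nski's construction) that preserve the stable isomorphism class while reducing the entries to $\{0,1\}$.

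Finally I would pin down the unit. By the choice of $A_{11}$ the class $[1_{O_A}]$ already has image $g_3$ in $G_3$, so $[1_{O_A}]-g$ lies in the image of $G_1$; exploiting the freedom to adjust the contribution of $V_2$ to the global unit (by enlarging $V_2$ through $K$-theory–preserving moves compatible with the already-fixed $A_{12}$) one arranges $[1_{O_A}]=g$. Since the resulting six-term sequence and $\mathcal E$ now agree on the two ends, on the index map, and on the extension class of the middle term, they are isomorphic as six-term exact sequences, and tracking the unit class yields $(K_\seasix(O_A),[1_{O_A}])\cong(\mathcal E,g)$. Thus $O_A$ is a real rank zero \seaCK{} with primitive ideal space $X_\seasix$ realising $(\mathcal E,g)$, as required.
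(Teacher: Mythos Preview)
The paper does not give its own proof of this theorem: it is quoted as a result of Eilers--Katsura--Tomforde--West and is used as a black box. So there is no proof in the paper to compare against; what you have written is a sketch of how the cited result might be established.

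As a sketch, your outline has the right architecture --- block upper-triangular $\{0,1\}$-matrices, endpoints realised via the simple range result, the index map encoded by the off-diagonal block, and the snake lemma identifying $K_\seasix$ --- and this is indeed the framework used in \cite{sea:ektw}. But you should be aware that the two places you flag as ``the main obstacle'' are genuine obstacles that your write-up does not resolve. First, the passage from an arbitrary integer $A_{12}$ to a $\{0,1\}$-matrix by graph moves is not free: in- and out-splittings applied to the diagonal blocks change the presentations of $\ker(I-A_{11}^{t})$ and $\operatorname{coker}(I-A_{22}^{t})$, so one must track how the prescribed index map and extension class transform under those moves and check that the required $A_{12}$ can still be chosen with $\{0,1\}$ entries afterwards; ``applying graph moves'' is not by itself an argument. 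Second, your unit-fixing step is underspecified: enlarging $V_2$ alters $I-A_{22}^{t}$ and hence the presentation of $G_1$, and you have not explained why one can hit an arbitrary preimage of $g_3$ in $G_2$ while leaving the already-arranged index map and extension class intact. In \cite{sea:ektw} both of these are handled by rather delicate explicit matrix constructions rather than by an abstract appeal to moves, and that is where the real work of the theorem lies.
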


\begin{corollary}
Phantom \seaCK s with exactly one nontrivial ideal do not exist.
\end{corollary}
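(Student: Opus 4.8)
The plan is to run exactly the same scheme as for the simple case, with the range result Theorem~\ref{sea:thmszymanski} for $K_*$ replaced by the range result Theorem~\ref{sea:thmektw} for $K_\seasix$, and with the classification theorem of Bonkat and Kirchberg above (the unlabelled theorem cited as \cite{sea:bonkat,sea:kirchberg}) taking the place of Theorem~\ref{sea:thmkirchbergphillips}. So let $A$ be a \seacsa{} that \sealookslike{} and has exactly one nontrivial ideal $I$, i.e.\ $0\subsetneq I\subsetneq A$ is its entire ideal lattice. Then its primitive ideal space is $X_\seasix$, with the open point $\{2\}$ recording $A(2)=I$ and the closed point $\{1\}$ recording $A(1)=A/I$. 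By condition~(1) of Definition~\ref{sea:defphantoms} the algebra $A$ is unital, purely infinite, nuclear, separable, and of real rank zero, and by condition~(\ref{sea:pointfour}) each fibre $A(x)$ lies in the bootstrap class; hence the classification theorem above will apply to $A$ as soon as a \seaCK{} with the same $K_\seasix$-invariant has been produced.

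The main step is to verify that $(K_\seasix(A),[1_A])$ lies in the range described by Theorem~\ref{sea:thmektw}. Matching the six-term sequence $\mathcal E$ against $K_\seasix(A)$, the groups are $G_1=K_0(A(2))$, $G_2=K_0(A)$, $G_3=K_0(A(1))$, $F_1=K_1(A(2))$, $F_2=K_1(A)$, $F_3=K_1(A(1))$. Invoking the induction argument recorded after Proposition~\ref{sea:propextensions}, condition~(\ref{sea:pointthree}) of Definition~\ref{sea:defphantoms} holds not only at the two points but on the locally closed subset $X_\seasix$ itself; consequently all six groups are finitely generated, the three groups $F_1,F_2,F_3$ are free, and $\searank G_i=\searank F_i$ for $i=1,2,3$. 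The only hypothesis of Theorem~\ref{sea:thmektw} that is not immediate from the pointwise data is the vanishing of the map $G_3\to F_1$, i.e.\ of the exponential map $K_0(A(1))\to K_1(A(2))$. This is precisely where real rank zero enters: applying the consequence of the Brown--Pedersen theorem stated before Proposition~\ref{sea:propextensions} with $Y=X_\seasix$ and $U=\{2\}$, so that $Y\setminus U=\{1\}$, the map $K_0(A(1))\to K_1(A(2))$ vanishes, which is exactly the zero-labelled arrow in the diagram of $\mathcal E$.

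With all hypotheses confirmed, Theorem~\ref{sea:thmektw} supplies a \seaCK{} $O_A$ of real rank zero with primitive ideal space $X_\seasix$ and $(K_\seasix(O_A),[1_{O_A}])\cong(K_\seasix(A),[1_A])$. Since $A$ and $O_A$ are both unital, purely infinite, nuclear, and separable with all fibres in the bootstrap class, the classification theorem above then forces $A\cong O_A$, so $A$ is a \seaCK{} and no phantom \seaCK{} with exactly one nontrivial ideal exists. I expect the only delicate point to be the bookkeeping that upgrades the pointwise hypotheses of Definition~\ref{sea:defphantoms} to the six-term hypotheses of Theorem~\ref{sea:thmektw}: one must be sure that finite generation, freeness of the odd groups, and the rank equality survive the passage from the fibres to $K_*(A)$, and that real rank zero is genuinely what kills the boundary map $K_0(A(1))\to K_1(A(2))$ --- both of which are furnished by the material collected around Proposition~\ref{sea:propextensions}.
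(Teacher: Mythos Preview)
Your proof is correct and is precisely the argument the paper intends: the corollary is stated without proof because it follows immediately from combining the range result Theorem~\ref{sea:thmektw} with the Bonkat--Kirchberg classification theorem, exactly as you have spelled out. The bookkeeping you flag (upgrading the pointwise hypotheses of Definition~\ref{sea:defphantoms} to all of $K_\seasix(A)$ via the induction argument, and killing the boundary map via Brown--Pedersen) is indeed the only content, and you have handled it correctly.
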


The generalization of the invariant $K_\seasix$ to larger primitive ideal spaces is called filtered $K$-theory or filtrated $K$-theory and was introduced by Gunnar Restorff and by Ralf Meyer and Ryszard Nest.
Filtered $K$-theory consists of the six-term exact sequences induced by all extensions of subquotients.
A smaller invariant, the reduced filtered $K$-theory $\seaFKgunnar$ originally defined by Gunnar Restorff to classify \seaCK s, has so far proven suitable for classifying \seacsa s that \sealooklike{}.

Let $X$ be a finite $T_0$-space. For $x\in X$, we denote by $\widetilde{\{x\}}$ the smallest open subset of $X$ containing $x$, and we define $\widetilde{\partial}(x)$ as $\widetilde{\{x\}}\setminus \{x\}$.
For $x,y\in X$ we write $y\to x$ when $y\in\widetilde\partial(x)$ and there is no $z\in\widetilde\partial(x)$ for which $y\in\widetilde\partial(z)$.
\begin{definition}
For a \seacsa{} $A$ with primitive ideal space $X$, its {\emph{reduced filtered $K$-theory} $\seaFKgunnar(A)$} consists of the groups and maps
\[ \xymatrix{
K_1(A(x)) \ar[r]^-{\delta} & K_0(A(\widetilde\partial(x))) \ar[r]^-{i} & K_0(A(\widetilde{\{x\}})) 
} \]
induced by the extension $0\to A(\widetilde\partial(x))\to A(\widetilde{\{x\}})\to A(x)\to 0$,
for all $x\in X$, together with the groups and maps
\[ \xymatrix{
K_0(A(\widetilde{\{y\}}) \ar[r]^-{i} & K_0(A(\widetilde\partial(x)))
} \]
induced by the extension $0\to A(\widetilde{\{y\}})\to A(\widetilde\partial(x))\to A(\widetilde\partial(x)\setminus\widetilde{\{y\}})\to 0$, for all $x,y\in X$ with $y\to x$.
\end{definition}

\begin{example}
Let $X=\{1,2,3\}$ be given the topology $\{\emptyset, \{3\}, \{3,2\}, \{3,1\},X\}$.  Then for a \seacsa{} $A$ with primitive ideal space $X$, its reduced filtered $K$-theory $\seaFKgunnar(A)$ consists the groups and maps
\[ \xymatrix@R=12pt@C=12pt{
K_1(A(2))\ar[dr]^-{\delta} && K_0(A(\{3,1\})) \\
 &K_0(A(3))\ar[ur]^-{i}\ar[dr]^-{i}& \\
K_1(A(1))\ar[ur]^-{\delta} && K_0(A(\{3,2\})) 
} \]
together with the group $K_1(A(3))$.
\end{example}

It is shown in~\cite{sea:abk} that if $A$ is a \seacsa{} of real rank zero with primitive ideal space $X$, then the sequence
\[
\bigoplus_{y\to x, y\to x'} K_0(A(\widetilde{\{y\}})) \stackrel{(i^2  \: -i^2)}{\longrightarrow} \bigoplus_{x\in X} K_0(A(\widetilde{\{x\}})) \stackrel{(i)}{\longrightarrow} K_0(A) \longrightarrow 0
\]
is exact.

\begin{definition}
For a unital \seacsa{} $A$ of real rank zero with primitive ideal space $X$, $1(A)$ is defined as the unique element in
\[ \bigoplus_{x\in X} K_0(A(\widetilde{\{x\}}))  / \bigoplus_{y\to x, y\to x'} K_0(A(\widetilde{\{y\}})) \]
that is mapped to $[1_A]$.
For $A$ and $B$ unital \seacsa s of real rank zero with primitive ideal space $X$, an isomorphism from $(\seaFKgunnar(A),1(A))$ to $(\seaFKgunnar(B),1(B))$ is defined as a family of isomorphisms
\begin{eqnarray*}
\phi_{\{x\}} \colon K_1(A(x))\to K_1(B(x))\\
\phi_{\widetilde\partial(x)} \colon K_0(A(\widetilde\partial(x))\to K_0(B(\widetilde\partial(x))\\
\phi_{\widetilde{\{x\}}} \colon K_0(A(\widetilde{\{x\}}))\to  K_0(B(\widetilde{\{x\}}))
\end{eqnarray*}
for all $x\in X$ that commute with the maps $i$ and $\delta$ and maps $1(A)$ to $1(B)$.
\end{definition}

Using Theorem~\ref{sea:thmektw}, Rasmus Bentmann, Takeshi Katsura, and the author have established the range of reduced filtered $K$-theory $\seaFKgunnar$ for graph algebras.

\begin{theorem}[{\cite{sea:abk}}]
Let $B$ be a \seacsa{} that \sealookslike{}.  Then there exists a \seaCK{} $O_A$ of real rank zero with $\seaPrim(O_A)\cong\seaPrim(B)$ for which $(\seaFKgunnar(O_A),[1_{O_A}])$ is isomorphic to $(\seaFKgunnar(B),[1_B])$.
\end{theorem}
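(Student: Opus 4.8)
The plan is to realize the datum $(\seaFKgunnar(B),1(B))$ by an explicit finite directed graph, constructed layer by layer along the poset $X=\seaPrim(B)$, with the two-point realization result Theorem~\ref{sea:thmektw} as the building block for each covering extension. First I would abstract the invariant into a purely $K$-theoretic datum and check that it meets the hypotheses of the range theorems. Because $B$ \sealookslike{}, the hereditary argument recorded after Proposition~\ref{sea:propextensions} shows that for every $x\in X$ the groups $K_1(B(x))$, $K_0(B(\widetilde\partial(x)))$ and $K_0(B(\widetilde{\{x\}}))$ are finitely generated, the relevant $K_1$-groups are free, and the ranks of $K_0$ and $K_1$ agree on every subquotient; moreover real rank zero forces every index map $K_0\to K_1$ to vanish, so each local six-term sequence collapses to an exact sequence
\[ K_1(B(x))\xrightarrow{\ \delta\ }K_0(B(\widetilde\partial(x)))\xrightarrow{\ i\ }K_0(B(\widetilde{\{x\}}))\xrightarrow{\ r\ }K_0(B(x))\to 0. \]
These are exactly the freeness, finiteness, rank, and vanishing conditions under which Theorems~\ref{sea:thmszymanski} and~\ref{sea:thmektw} produce real rank zero \seaCK s, so the datum extracted from $B$ lies within reach of those constructions.

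Next I would build the graph by induction along a linear extension $x_1,\dots,x_n$ of the specialization order, so that the open sets $U_k=\{x_1,\dots,x_k\}$ are realized one at a time. At stage $k$ I take the graph already built for $U_{k-1}$, which by construction realizes the full reduced filtered $K$-theory below $x_k$ — in particular the predecessor maps $K_0(B(\widetilde{\{y\}}))\to K_0(B(\widetilde\partial(x_k)))$ for all $y\to x_k$ — and adjoin a fresh block of vertices for $x_k$ together with edges running from the new block into the blocks contained in $\widetilde\partial(x_k)$. Since the hereditary saturated set underlying $U_{k-1}$ is left untouched, these new edges do not perturb any $K$-theory realized at the lower layers; their multiplicities are instead chosen so that the new block realizes the simple quotient $B(x_k)$, with $K_1(B(x_k))$ of the prescribed free rank, and so that the induced connecting map $\delta$ and inclusion $i$ agree with the prescribed ones. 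This last step is precisely a covering-extension realization, for which Theorem~\ref{sea:thmektw} supplies the construction. Throughout, the unit is tracked through the presentation
\[ \bigoplus_{y\to x,\, y\to x'} K_0(B(\widetilde{\{y\}}))\longrightarrow\bigoplus_{x\in X}K_0(B(\widetilde{\{x\}}))\longrightarrow K_0(B)\longrightarrow 0, \]
the rank-equality hypothesis being used to allot each block the right number of vertices so that $1(B)$ is matched.

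The main obstacle I anticipate is turning the separate local realizations into a single coherent graph. Theorem~\ref{sea:thmektw} is genuinely a two-point statement, whereas $\widetilde\partial(x)$ is in general assembled from several immediate predecessors $y\to x$ whose open neighborhoods $\widetilde{\{y\}}$ overlap; realizing $\delta$ as a specific homomorphism into the already-fixed group $K_0(B(\widetilde\partial(x)))$, while every predecessor map and every lower connecting map retains its prescribed value and the unit class is matched simultaneously across all blocks, is the delicate part. Once the edge multiplicities solving this system are produced and the total adjacency matrix is checked to satisfy Cuntz's condition~(II) with no sinks or sources, the remaining verifications — that the graph algebra is a real rank zero \seaCK{} with primitive ideal space $X$ and the prescribed reduced filtered $K$-theory — follow routinely from the construction.
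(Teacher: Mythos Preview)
The paper is a survey and does not supply its own proof of this theorem; it merely cites \cite{sea:abk} and records the single hint that the argument ``uses Theorem~\ref{sea:thmektw}.'' So there is no proof in the present paper to compare against. Your proposal is consistent with that hint: you take the two-point range result Theorem~\ref{sea:thmektw} as the building block and attempt to assemble a global graph by induction along a linear extension of the specialization order on $X$.

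That said, your sketch is honest about where it stops being a proof. Theorem~\ref{sea:thmektw} produces \emph{some} Cuntz--Krieger algebra realizing a prescribed six-term sequence over $X_\seasix$; it does not say that a \emph{given} graph realizing the ideal side can be enlarged to realize a prescribed extension. Your inductive step needs exactly this relative statement: with the graph for $U_{k-1}$ already fixed, adjoin vertices and edges for $x_k$ so that the resulting $\delta$ and $i$ hit prescribed targets in the already-constructed $K_0(B(\widetilde\partial(x_k)))$, while all lower data and the unit class are preserved. You flag this as ``the delicate part,'' and it is; resolving it is essentially the content of \cite{sea:abk}, not something that falls out of Theorem~\ref{sea:thmektw} as stated. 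Until that relative realization is supplied, the proposal is a reasonable plan rather than a proof.
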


\begin{definition} \label{sea:defaccordion}
A finite, connected $T_0$-space $X$ is called an \emph{accordion space} if for all $x\in X$ there are at most two elements $y\in X$ for which $y\to x$, and if there is at least two elements $x\in X$ for which there is exactly one element $y\in X$ for which $y\to x$.
\end{definition}

The notion of accordion spaces was introduced by Rasmus Bentmann in~\cite{sea:bentmann}.
Intuitively, a space is an accordion space if and only if the Hasse diagram of the ordering defined by $y\leq x$ when $y\in\widetilde{\{x\}}$, looks like an accordion.
All finite, linear spaces are accordion spaces, and the following five spaces are examples of connected spaces that are not accordion spaces.

\begin{definition} \label{sea:deffourpointspaces}
Define a topology on the space $\mathcal X=\{1,2,3,4\}$ by defining $U\subseteq\mathcal X$ to be open if $U$ is empty or $4\in U$.  Define $\mathcal X^\seaop$ as having the opposite topology.  Then $\mathcal X$ and $\mathcal X^\seaop$ have Hasse diagrams
\[ \xymatrix@R=12pt@C=12pt{ 
1 & 2 & 3 && &4& \\
&4\ar[ul]\ar[u]\ar[ur]& && 1\ar[ur] & 2\ar[u] & 3\ar[ul]
} \]
respectively.
Define a topology on the space $\mathcal Y=\{1,2,3,4\}$ by defining $U\subseteq\mathcal X$ to be open if $U\in\{\emptyset,\{4\}\}$ or if $\{3,4\}\subseteq U$.  Define $\mathcal Y^\seaop$ as having the opposite topology.  Then $\mathcal Y$ and $\mathcal Y^\seaop$ have Hasse diagrams
\[ \xymatrix@R=12pt@C=12pt{ 1 && 2 && &4& \\
&3\ar[ul]\ar[ur]& && &3\ar[u]& \\
&4\ar[u]& && 1\ar[ur]&&2\ar[ul]
} \]
respectively.
Finally, define a topology on the space $\mathcal D=\{1,2,3.4\}$ as the open sets being $\{\emptyset,\{4\},\{3,4\},\{2,4\},\{2,3,4\},\mathcal D\}$. Then $\mathcal D$ has Hasse diagram
\[ \xymatrix@R=12pt@C=12pt{
& 1 & \\
2\ar[ur] && 3\ar[ul] \\
& 4\ar[ur]\ar[ul] & .
} \]
\end{definition}

Ralf Meyer and Ryszard Nest showed in~\cite{sea:meyernest} that if $X$ is a finite, linear space, then filtered $K$-theory is a complete invariant for all stable, purely infinite, nuclear, separable \seacsa s $A$ with primitive ideal space $X$ that satisfy that $A(x)$ are in the bootstrap class for all $x\in X$.  They also gave a counter-example to completeness of filtered $K$-theory for the space $\mathcal X$.  Using their methods, Rasmus Bentmann and Manuel K\"ohler showed in~\cite{sea:bk} that filtered $K$-theory is a complete invariant for such \seacsa s exactly when their primitive ideal space $X$ is an accordion space.

However, Gunnar Restorff, Efren Ruiz, and the author showed in~\cite{sea:arr} that for the spaces $\mathcal X$, $\mathcal X^\seaop$, $\mathcal Y$, and $\mathcal Y^\seaop$, filtered $K$-theory is a complete invariant for such \seacsa s if one adds the assumption of real rank zero.  And in~\cite{sea:abk}, Rasmus Bentmann, Takeshi Katsura, and the author showed that for the space $\mathcal D$, reduced filtered $K$-theory is a complete invariant for \seacsa s that \sealooklike{}.  It is also shown in~\cite{sea:abk} that for \seacsa s that \sealooklike{} and have either an accordion space or one of the spaces defined in Definition~\ref{sea:deffourpointspaces} as primitive ideal space, any isomorphism on reduced filtered $K$-theory can be lifted to an isomorphism on filtered $K$-theory.

The five spaces of Definition~\ref{sea:deffourpointspaces} are so far the only non-accordion spaces for which such results have been achieved.
However, combining these results with Theorem~2.1 of~\cite{sea:rr} gives the following theorem, cf.~\cite{sea:abk}.

\begin{theorem}[{\cite{sea:meyernest,sea:bentmann,sea:arr,sea:abk}}]
Let $X$ be either an accordion space or one of the spaces defined in Definition~\ref{sea:deffourpointspaces}.
Let $A$ and $B$ be \seacsa s that \sealooklike{} and have $X$ as primitive ideal space.
Then $(\seaFKgunnar(A),1(A))\cong(\seaFKgunnar(B),1(B))$ implies $A\cong B$.
\end{theorem}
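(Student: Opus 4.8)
The plan is to reduce the statement to the known completeness theorems for full filtered $K$-theory, first by lifting the given isomorphism of the reduced invariant, and then by passing from a stable isomorphism to a unital one. Since $A$ and $B$ \sealooklike{}, they are unital of real rank zero, and the induction argument preceding Proposition~\ref{sea:propextensions} shows that conditions (\ref{sea:pointthree}) and (\ref{sea:pointfour}) of Definition~\ref{sea:defphantoms} hold over every locally closed subset of $X$, so in particular every subquotient has bootstrap fibres. First I would apply the lifting result of \cite{sea:abk}, available exactly because $X$ is an accordion space or one of $\mathcal X, \mathcal X^\seaop, \mathcal Y, \mathcal Y^\seaop, \mathcal D$, to promote the given isomorphism $(\seaFKgunnar(A),1(A))\cong(\seaFKgunnar(B),1(B))$ to an isomorphism of the full filtered $K$-theory of $A$ and $B$ that is still compatible with the distinguished unit data. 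I expect this to be the main obstacle: reduced filtered $K$-theory forgets most of the six-term sequences, and reconstructing them relies essentially on real rank zero, since by Theorem~3.14 of \cite{sea:brownpedersen} all index maps $K_0\to K_1$ of subquotient extensions vanish, which both rigidifies the full invariant and forces the missing maps to be determined by the reduced data; verifying the compatibilities then proceeds by an induction over the finitely many points of $X$.

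With a full filtered $K$-theory isomorphism in hand, I would split on the homeomorphism type of $X$ and invoke the appropriate completeness theorem for stable, purely infinite, nuclear, separable \seacsa s over $X$ whose fibres lie in the bootstrap class, applied to $A\otimes\mathbb K$ and $B\otimes\mathbb K$. If $X$ is an accordion space, filtered $K$-theory is a complete invariant by Meyer and Nest in the linear case together with its accordion extension (cf.~\cite{sea:meyernest,sea:bk}), yielding a stable isomorphism. If $X$ is one of $\mathcal X, \mathcal X^\seaop, \mathcal Y, \mathcal Y^\seaop$, the same conclusion follows from \cite{sea:arr}, where completeness of filtered $K$-theory is established once real rank zero is assumed. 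For $X=\mathcal D$ one may argue identically, or else shortcut the lifting step and apply directly the completeness of reduced filtered $K$-theory for algebras that \sealooklike{} proved in \cite{sea:abk}.

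Finally, each stable isomorphism produced above respects the unit classes encoded by $1(A)$ and $1(B)$, so I would upgrade it to a genuine unital $*$-isomorphism $A\cong B$ by Theorem~2.1 of \cite{sea:rr}, whose purpose is precisely to pass from stable to unital isomorphism in the presence of this $K$-theoretic unital datum. Assembling the three cases then completes the argument.
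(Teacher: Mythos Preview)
Your proposal is correct and follows essentially the same route as the paper: the paper does not give a formal proof but indicates precisely this strategy in the paragraph preceding the theorem, namely lift the reduced invariant to full filtered $K$-theory via \cite{sea:abk}, apply the appropriate completeness theorem (\cite{sea:meyernest,sea:bentmann,sea:bk} for accordion spaces, \cite{sea:arr} for $\mathcal X,\mathcal X^\seaop,\mathcal Y,\mathcal Y^\seaop$, \cite{sea:abk} for $\mathcal D$) to obtain a stable isomorphism, and then upgrade to a unital isomorphism using Theorem~2.1 of \cite{sea:rr}. Your added remarks on why real rank zero makes the lifting step work are accurate and consistent with the paper's surrounding discussion.
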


\begin{corollary}
Let $X$ be either an accordion space or one of the spaces defined in Definition~\ref{sea:deffourpointspaces}.
Then phantom \seaCK s with primitive ideal space $X$ do not exist.
\end{corollary}

\section{Summary}
The results stated in this article, are recaptured in the following theorem.
\begin{theorem}
Let $A$ be a \seacsa{} that \sealookslike{}.  If $A$ satisfies either of the following conditions, 
\begin{itemize}
\item $A$ is a graph algebra,
\item $K_{*}(A)=0$,
\item $\seaPrim(A)$ is an accordion space,
\item $\seaPrim(A)$ is one of the five four-point spaces of Definition~\ref{sea:deffourpointspaces},
\end{itemize}
then $A$ is isomorphic to a \seaCK{}.
\end{theorem}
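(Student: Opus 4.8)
The plan is to read the theorem as four independent implications, one for each bullet, and to reduce each to a non-existence statement already established in the excerpt. Throughout, $A$ \sealookslike{}, so properties~(1)--(\ref{sea:pointfour}) of Definition~\ref{sea:defphantoms} are at our disposal; the only work is to pair each of the four conditions with its matching result and to check that the hypotheses of that result are met.

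If $A$ is a graph algebra, I would apply the theorem of~\cite{sea:ar} which states that a unital graph \seacsa{} $C^*(E)$ satisfying $\searank K_0(C^*(E))=\searank K_1(C^*(E))$ is isomorphic to a \seaCK{}. Unitality is immediate from property~(1). The theorem asks for the \emph{global} rank equality, whereas item~(\ref{sea:pointthree}) of Definition~\ref{sea:defphantoms} only records the equality on each simple subquotient $A(x)$; bridging this gap is the one genuine step. As observed in the discussion preceding Proposition~\ref{sea:propextensions}, real rank zero forces the connecting maps $K_0\to K_1$ of every subquotient extension to vanish, whence an induction shows that item~(\ref{sea:pointthree}) persists to every locally closed subset of $X$, and in particular to $X$ itself. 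Taking the whole space then yields $\searank K_0(A)=\searank K_1(A)$, so the cited theorem applies and $A$ is a \seaCK{}. (Equivalently, one may argue directly that $\searank K_0-\searank K_1$ is additive along extensions, via the vanishing of the alternating sum of ranks in the induced six-term exact sequence, and then filter $A$ by the ideals coming from its finite primitive ideal space.)

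If $K_*(A)=0$, the conclusion is exactly the corollary following Theorem~\ref{sea:thmkirchberg}, so I would cite it verbatim. If $\seaPrim(A)$ is an accordion space, or one of the five four-point spaces of Definition~\ref{sea:deffourpointspaces}, both cases are subsumed by the final corollary, which asserts that phantom \seaCK s with such a primitive ideal space do not exist; this too is a direct appeal. The main --- and essentially only --- obstacle is the rank bookkeeping in the graph-algebra case, namely upgrading the pointwise rank hypothesis built into the definition to the global rank hypothesis demanded by the theorem of~\cite{sea:ar}. Once that is in hand, each of the four implications is an immediate invocation of an earlier result.
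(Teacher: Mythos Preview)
Your proposal is correct and matches the paper's intent: the theorem is stated as a summary with no separate proof, each bullet simply recapitulating one of the earlier results (the theorem of~\cite{sea:ar}, the corollary to Theorem~\ref{sea:thmkirchberg}, and the corollary covering accordion spaces and the spaces of Definition~\ref{sea:deffourpointspaces}). Your one piece of added care---verifying that the global rank equality $\searank K_0(A)=\searank K_1(A)$ needed for the graph-algebra theorem follows from the pointwise condition~(\ref{sea:pointthree}) via the induction noted before Proposition~\ref{sea:propextensions}---is exactly the right bridge, and the paper makes the same observation in that passage.
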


It is unknown whether phantom \seaCK s exist in general.

\end{document}